\theoremstyle{plain}
\newtheorem{theorem}{Theorem}
\newtheorem{lemma}[theorem]{Lemma}
\theoremstyle{definition}
\newtheorem{remark}[theorem]{Remark}
\renewcommand{\leq}{\leqslant}
\renewcommand{\geq}{\geqslant}
\newcommand{\EE}{\mathrm{E}}
\newcommand{\RR}{\mathbb{R}}
\title{On stochastic heat equation with measure initial data 
 }
\author{Jingyu Huang 
    }
\begin{document}
\maketitle
\begin{abstract}
	The aim of this short note is to obtain the existence, uniqueness and moment upper bounds of the solution to a stochastic heat equation with measure initial data, without using the iteration method in \cite{ChenDalang, ChenDalang2,  ChenKim}. \\

	\noindent{\it Keywords: stochastic heat equation, measure initial data, L\'evy bridge. }  \\

	\noindent{\it \noindent AMS 2010 subject classification.}
	Primary 60H15; Secondary 35R60, 60G60.
\end{abstract}
\section{Introduction}
Consider the stochastic heat equation
\begin{equation}\label{eq:SHE}
\frac{\partial u}{\partial t} =  \mathcal{L} u + b(u)+  \sigma(u)\dot{W}
\end{equation}
for $(t,x)\in (0,\infty)\times \RR^d$($d\geq 1$) where $\mathcal{L}$ is the generator of a L\'evy process $X=\{X_t\}_{t\geq 0}$.  
$\dot{W}$ is a centered Gaussian noise with covariance formally given by 
\begin{equation*}
\EE \big(\dot{W}(t,x) \dot{W}(s,y) \big) = \delta (s-t) f(x-y)\,,
\end{equation*}
where $f$ is some nonnegative and nonnegative definite function whose Fourier transform is denoted by 
\begin{equation*}
\hat{f}(\xi) := \int_{\RR^d} f(x)e^{-i x \xi}dx
\end{equation*}
in distributional sense, and $\delta$ denotes the Dirac delta function at $0$. For some technical reasons, we will assume that $f$ is lower semicoutinous (see Lemma \ref{eq:lem Davar} below). 

Let $\Phi$ be the L\'evy exponent of $X_t$, we will assume that
\begin{equation}\label{eq:exp Phi in Lt}  
\exp (-\text{Re}\Phi) \in L^t(\RR^d) \ \text{for all}\  t>0\,.
\end{equation}
Thus according to Proposition 2.1 in \cite{FoondunKhoshnevisan}, $X_t$ has a transition function $p_t(x)$ and we can (and will) find a version of $p_t(x)$ which is continuous on $(0, \infty)\times \RR^d$ and uniformly continuous for all $(t,x)\in [\eta, \infty)\times \RR^d$ for every $\eta>0$, and that $p_t$ vanishes at infinity for all $t>0$. 

 The initial condition $u(0, \cdot)$ is assumed to be a (positive) measure $\mu(\cdot)$ such that
\begin{equation}\label{eq:initial cond int}
 \int_{\RR^d} p_t(x-y) \mu(dy) < \infty \quad \text{for all $t>0$ and $x\in \RR^d$} \,.
\end{equation}
To avoid trivialities, we assume that $\mu(\cdot) \not\equiv 0$.

Using iteration method, the existence, uniqueness and some moment bounds of the solution have been obtained in \cite{ChenDalang, ChenDalang2,  ChenKim} for the case $b\equiv 0$ and for some specific choice of $\mathcal{L}$. However, these approaches rely on the structure (or asymptotic structure) of $p_t(x)$.  In this article, we will study the equation \eqref{eq:SHE} with also a Lipschitz drift term $b$ and establish the existence, uniqueness and $p$-th moment upper bound, without using the iteration method in  \cite{ChenDalang, ChenDalang2,  ChenKim}, also, our cribteria only need some integrability of the L\'evy exponent. 

To state the result, let us recall that by a solution $u$ to \eqref{eq:SHE} we mean a mild solution. That is, (i) $u$ is a predictable random field on a complete probability space $\{\Omega, \mathcal{F}, P \}$, with respect to the Brownian filtration generated by the cylindrical Brownian motion defined by $B_t(\phi):= \int_{[0,t]\times \RR^d} \phi(y) W(ds,dy)$, for all $t \geq 0$ and measurable $\phi : \RR^d \to \RR$ such that $\int_{\RR^d\times \RR^{d}} \phi(y)\phi(z)f(y-z)dydz < \infty$; and (ii) 
for any $(t,x) \in (0, \infty) \times \RR^d$, the following equation holds a.s. 
\begin{align}\label{eq:mild}
u(t, x)= \int_{\RR^d}p_t(x-y) \mu(dy) &+ \int_0^t \int_{\RR^d} p_{t-s}(x-y) b(u(s,y)) dy ds \notag\\
 & + \int_0^t \int_{\RR^d} p_{t-s}(x-y) \sigma(u(s,y))W(ds,dy)\,.
\end{align}
where $p_t(x)$ is the transition function for $X_t$ and the stochastic integral above is in the sense of Walsh \cite{Walsh}. The following theorem is the main result of this paper. 

\begin{theorem}\label{thm:existence bd}
Assume that the initial condition satisfies \eqref{eq:initial cond int} and assume that 
\begin{equation}\label{eq:Dalang condition}
\Upsilon(\beta) := \sup_{t>0} \int_0^t \int_{\RR^d} \exp \left[ -2s \mathrm {Re}\Phi\left((1-\frac{s}{t})\xi\right) - 2(t-s)\mathrm{Re}\Phi\left(\frac{s}{t}\xi\right) \right] e^{-2\beta (t-s)} \hat{f}(\xi)d\xi ds < \infty
\end{equation}
and
\begin{equation}\label{eq: Dalang condition ori}
\tilde{\Upsilon}(\beta) := \int_{\RR^d} \frac{\hat{f}(\xi)d\xi}{\beta + \mathrm{Re}\Phi(\xi)} < \infty
\end{equation}
for any $\beta>0$. And assume that $\sigma$ and $b$ are Lipschitz functions with Lipschitz coefficients $L_{\sigma}, L_b >0$ respectively. Then there exists a unique mild solution to equation \eqref{eq:SHE}. Moreover, 
define 
\begin{equation}
\bar{\gamma}(p) := \limsup_{t \to \infty}\frac{1}{t} \sup_{x\in \RR^d}\log \left\|\frac{u(t,x)}{\tau + p_t*\mu(x)}\right\|_{L^p(\Omega)}\,,
\end{equation}
where  
\begin{equation}\label{eq:tau}
\tau = \max\left\{\frac{|b(0)|}{L_b}, \frac{|\sigma(0)|}{L_{\sigma}}\right\}\,.
\end{equation}
Then, 
\begin{equation}\label{eq:upper gamma}
\bar{\gamma}(p)\leq \inf\{\beta>0: B(\beta, p)<1\} \quad \text{for all integers $p \geq 2$}\,,
\end{equation}
where 
\begin{equation}\label{eq:B}
B(\beta, p) :=  \frac{L_b}{\beta}+    \frac{z_p L_{\sigma}}{(2\pi)^{d/2}}\left(\sqrt{\frac{\tilde{\Upsilon}(\beta)}{2} }+ \sqrt{\Upsilon(\beta)} \right) \,,
\end{equation}
and $z_p$ denotes the largest positive zero of the Hermite polynomial $He_p$. 
\end{theorem}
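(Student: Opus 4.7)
The plan is to construct the solution by a single Picard iteration carried out in a weighted space that encodes both the measure initial data and the expected growth rate. Introduce
\[\mathcal{N}_{\beta,p}(v) := \sup_{t>0,\, x\in\R^d} e^{-\beta t}\left\|\frac{v(t,x)}{\tau + p_t*\mu(x)}\right\|_{L^p(\Omega)},\]
set $u_0(t,x) := p_t*\mu(x)$, and define $u_{n+1}$ by substituting $u_n$ for $u$ in the right-hand side of \eqref{eq:mild}. The whole theorem reduces to showing that this iteration is a strict contraction in $\mathcal{N}_{\beta,p}$ with operator-norm coefficient exactly $B(\beta,p)$; existence, uniqueness, and \eqref{eq:upper gamma} then follow by the Banach fixed-point argument.

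The drift part is straightforward. The Lipschitz bound $|b(v)| \le L_b(\tau + |v|)$ (valid because $\tau \ge |b(0)|/L_b$) combined with the semigroup identity $\int p_{t-s}(x-y)(\tau + p_s*\mu(y))\,dy = \tau + p_t*\mu(x)$ and Minkowski's inequality reduces the weighted $L^p$-norm of the drift convolution to at most $(L_b/\beta)(1 + \mathcal{N}_{\beta,p}(u_n))$ times $e^{\beta t}$. The coefficient of $\mathcal{N}_{\beta,p}(u_n)$ is $L_b/\beta$, the first summand in $B(\beta,p)$.

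The diffusion part is the technical core. The sharp $L^p(\Omega)$-BDG inequality of Carlen--Kree (best constant $z_p$, the largest zero of $He_p$) controls the $L^p$-norm of the stochastic integral by the square root of the $L^{p/2}$-norm of its conditional quadratic variation. Using $|\sigma(v)| \le L_\sigma(\tau + |v|)$, Minkowski in $L^{p/2}$, and $(a+b)^2 \le 2a^2 + 2b^2$, the integrand splits into a constant ``$\tau^2$-piece'' and an initial-data-dependent ``$\mu$-piece''. Plancherel in $(y,z)$ turns the former, after introducing the $e^{-2\beta(t-s)}$ weight, into $(2\pi)^{-d}\int_0^t\!\int e^{-2(t-s)(\beta+\mathrm{Re}\Phi(\xi))}\hat f(\xi)\,d\xi\,ds \le (2\pi)^{-d}\tilde\Upsilon(\beta)/2$. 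For the latter, write $p_s*\mu(y)=\int p_s(y-w)\mu(dw)$ and use the bridge factorization $p_{t-s}(x-y)p_s(y-w) = p_t(x-w)\,q^{w\to x}_{t,s}(y)$, where $q^{w\to x}_{t,s}$ is the density at time $s$ of the L\'evy bridge of $X$ from $w$ to $x$ over $[0,t]$. Pulling $\mu\otimes\mu$ outside, the inner double integral equals $p_t(x-w_1)p_t(x-w_2)\,\E[f(Y^{(1)}_s - Y^{(2)}_s)]$ for two independent such bridges; Fourier inversion together with the identity $|\E e^{i\xi(X_s - (s/t)X_t)}|^2 = e^{-2s\mathrm{Re}\Phi((1-s/t)\xi)-2(t-s)\mathrm{Re}\Phi((s/t)\xi)}$, plus absorbing the $\mu$-dependent phases by their modulus, controls the $\mu$-piece by $(p_t*\mu(x))^2(2\pi)^{-d}\Upsilon(\beta)$. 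Taking square roots and recombining yields the $z_p L_\sigma(2\pi)^{-d/2}(\sqrt{\tilde\Upsilon(\beta)/2} + \sqrt{\Upsilon(\beta)})$ summand of $B(\beta,p)$.

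Applied verbatim to the difference $u_{n+1}-u_n$, the same estimates yield a pure contraction bound $\mathcal{N}_{\beta,p}(u_{n+1}-u_n) \le B(\beta,p)\mathcal{N}_{\beta,p}(u_n - u_{n-1})$; hence whenever $B(\beta,p)<1$ the sequence is Cauchy in the Banach space of measurable random fields with finite $\mathcal{N}_{\beta,p}$-norm and converges to the unique mild solution of \eqref{eq:SHE}, with $\mathcal{N}_{\beta,p}(u)<\infty$. Infimizing $\beta$ over $\{B(\beta,p)<1\}$ gives \eqref{eq:upper gamma}. The main technical obstacle is the $\mu$-piece computation: converting the apparently initial-data-dependent kernel $(p_s*\mu(y))(p_s*\mu(z))$ into a clean product $(p_t*\mu(x))^2$ times a measure-independent Fourier integral, via the L\'evy-bridge factorization, is precisely the step that liberates the argument from the pointwise asymptotics of $p_t(x)$ used in \cite{ChenDalang,ChenDalang2,ChenKim}.
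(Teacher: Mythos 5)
Your proposal follows essentially the same route as the paper: a weighted Picard iteration in the norm $\sup_{t,x} e^{-\beta t}\|\,\cdot\,/(\tau+p_t*\mu)\|_{L^p(\Omega)}$, the drift handled by $|b(v)|\le L_b(\tau+|v|)$ plus the semigroup identity (giving $L_b/\beta$), the noise handled by the sharp Burkholder constant $z_p$ with the quadratic variation split into a $\tau$-piece (Plancherel, yielding $\tilde\Upsilon(\beta)/2$) and a $\mu$-piece resolved by the L\'evy-bridge factorization and the characteristic-function identity for $X_s-\frac{s}{t}X_t$ (yielding $\Upsilon(\beta)$), which is exactly the paper's Lemma \ref{lem:bridge fourier}, followed by the contraction estimate for differences and the growth bound \eqref{eq:upper gamma} from membership in the weighted space. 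The one step that, as written, does not deliver the stated constant is the splitting via $(a+b)^2\le 2a^2+2b^2$: since the kernel $f$ is nonnegative definite, that elementary bound costs an extra factor $\sqrt{2}$ after taking square roots, and you would end up with $\sqrt{2}\bigl(\sqrt{\tilde\Upsilon(\beta)/2}+\sqrt{\Upsilon(\beta)}\bigr)$ rather than the sum appearing in \eqref{eq:B} (harmless for existence and uniqueness, but it weakens the bound \eqref{eq:upper gamma} as stated). The fix is what the paper does: split the integrand additively using \eqref{eq: bd tau heat kernel} and apply Minkowski's (triangle) inequality in the seminorm induced by $f$ over $dy_1\,dy_2\,ds$, i.e.\ bound the square root of the quadratic form of a sum by the sum of the square roots, which gives $\sqrt{\tilde\Upsilon(\beta)/2}+\sqrt{\Upsilon(\beta)}$ with no extra factor and hence the exact $B(\beta,p)$ of \eqref{eq:B}.
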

\begin{remark}
If we choose $\mathcal{L}$ to be the generator of an $a$-stable L\'evy process $D_{\theta}^a$ for $1< a < 2$, where $\theta$ is the skewness and $|\theta|< 2-a$(see \cite{ChenDalang2}),  or the Laplacian $\frac{1}{2}\Delta$ ($a=2$), then the classical Dalang's condition
\begin{equation}
\int_{\RR^d} \frac{\hat{f}(d\xi)}{1+|\xi|^a} < \infty
\end{equation}
implies condition \eqref{eq:Dalang condition}, since in this case $\text{Re}\Phi(\xi) = C|\xi|^a$ for some $C>0$. Also, in the case $d=1$ and $\dot{W}$ is a space-time white noise, that is, $f(\xi)\equiv 1$, condition \eqref{eq: Dalang condition ori} clearly guarantees that \eqref{eq:exp Phi in Lt} holds. 
\end{remark}

\begin{remark}
(Borrowed from \cite[Remark 1.5]{FoondunKhoshnevisan}). Recall that 
\begin{equation*}
He_k(x) = 2^{-k/2} H_k(x/\sqrt{2})  \quad \text{for all integers }\ k\geq 0 \ \text{and}\ x \in \RR\,,
\end{equation*}
where $\{H_k\}_{k=0}^{\infty}$ is defined uniquely via the following:
\begin{equation*}
e^{-2xt - t^2} = \sum_{k=0}^{\infty} \frac{t^k}{k!} H_k(x) \quad (t>0, x\in \RR)\,.
\end{equation*}
\end{remark}

\section{Proof of Theorem \ref{thm:existence bd}}
In the proof of Theorem \ref{thm:existence bd} we will need two results about taking Fourier transforms, which we now state. 
\begin{lemma}[Corollary 3.4 in \cite{FoondunKhoshnevisan}]\label{eq:lem Davar}
Assume that $f$ is lower semicontinuous, then for all Borel probability measures $\nu$ on $\RR^d$, 
\begin{equation*}
\int_{\RR^d}\int_{\RR^d} f(x-y) \nu(dx)\nu(dy) = \frac{1}{(2\pi)^d} \int_{\RR^d} \hat{f}(\xi) |\hat{\nu}(\xi)|^2 d\xi\,.
\end{equation*}
\end{lemma}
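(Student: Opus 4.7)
The plan is to establish this Parseval-type identity by combining the Bochner--Schwartz spectral representation of $f$ with a Gaussian regularization argument, exploiting nonnegativity throughout. Since $f$ is nonnegative definite, Bochner--Schwartz yields that $\hat f$ (defined in the distributional sense as in the paper) is a nonnegative tempered Radon measure on $\RR^d$, with Lebesgue density $\hat f(\xi)\ge 0$ in the setting of the statement, so that as tempered distributions
$$f(x) = \frac{1}{(2\pi)^d}\int_{\RR^d}e^{ix\xi}\hat f(\xi)\,d\xi.$$
I would then collapse the double integral on the left by introducing the symmetric Borel probability measure $\rho:=\nu*\tilde\nu$, where $\tilde\nu(E):=\nu(-E)$; it satisfies $\hat\rho(\xi)=|\hat\nu(\xi)|^2$, and the claim reduces to proving $\int_{\RR^d} f\,d\rho = (2\pi)^{-d}\int_{\RR^d} \hat f(\xi)\hat\rho(\xi)\,d\xi$.

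To justify the Fubini-type interchange that underlies this identity I would regularize $f$ by a Gaussian kernel $h_\epsilon(x):=(2\pi\epsilon)^{-d/2}e^{-|x|^2/(2\epsilon)}$ and set $f_\epsilon:=f*h_\epsilon$. Then $f_\epsilon$ is smooth, bounded, nonnegative, and still nonnegative definite, with $\widehat{f_\epsilon}(\xi)=e^{-\epsilon|\xi|^2/2}\hat f(\xi)\in L^1(\RR^d)$. For this regular $f_\epsilon$ paired with the finite probability measure $\rho$, the classical Parseval identity applies cleanly:
$$\int f_\epsilon\,d\rho=\frac{1}{(2\pi)^d}\int e^{-\epsilon|\xi|^2/2}\hat\rho(\xi)\hat f(\xi)\,d\xi.$$
Letting $\epsilon\downarrow 0$, the right-hand side increases monotonically to $(2\pi)^{-d}\int\hat f\hat\rho\,d\xi$ by monotone convergence; on the left, lower semicontinuity of $f$ gives $f(x)\le\liminf_{\epsilon\downarrow 0}f_\epsilon(x)$ pointwise, so Fatou's lemma delivers $\int f\,d\rho\le\liminf_\epsilon\int f_\epsilon\,d\rho$. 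This produces the ``$\le$'' direction of the claim.

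The main obstacle is the matching reverse inequality $\int f\,d\rho\ge(2\pi)^{-d}\int\hat f\hat\rho\,d\xi$, which is where the lower semicontinuity hypothesis is essentially used (as opposed to mere measurability). My plan there is to exploit the fact that any nonnegative lower-semicontinuous function is the pointwise supremum of an increasing family of bounded continuous functions $g\le f$, combined with a spectral truncation $\hat f\cdot\one_{\{|\xi|\le R\}}$ of $\hat f$ to produce bounded continuous nonnegative-definite approximants for which Parseval is unproblematic and both sides are finite. A double monotone-convergence argument---first in the spectral cutoff $R$ and then in the LSC approximation---should close the gap and complete the proof.
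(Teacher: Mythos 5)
The paper itself offers no proof of this lemma: it is imported verbatim as Corollary 3.4 of \cite{FoondunKhoshnevisan}, so there is no in-paper argument to compare against, and your attempt has to stand on its own. Its first half does: reducing the double integral to $\int f\,d\rho$ with $\rho=\nu*\tilde\nu$, applying classical Parseval to the mollification $f_\epsilon=f*h_\epsilon$ (whose Fourier transform $e^{-\epsilon|\xi|^2/2}\hat f$ is integrable), sending the Fourier side up by monotone convergence, and combining lower semicontinuity with Fatou on the physical side is correct and gives $\int\int f(x-y)\,\nu(dx)\nu(dy)\le (2\pi)^{-d}\int\hat f(\xi)|\hat\nu(\xi)|^2\,d\xi$.

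The reverse inequality is where the proposal breaks down, and the gap is not cosmetic. First, the mechanism you sketch does not connect the two sides: a bounded continuous minorant $g\le f$ need not be nonnegative definite and tells you nothing about $\hat f$, while the band-limited function $f_R(x)=(2\pi)^{-d}\int_{|\xi|\le R}e^{ix\xi}\hat f(\xi)\,d\xi$ (for which Parseval is indeed unproblematic) is \emph{not} a pointwise minorant of $f$, since spectral truncation does not preserve domination; neither family lets a monotone-convergence argument close the gap. Second, and more fundamentally, you attribute the reverse inequality to lower semicontinuity, but LSC together with nonnegativity and distributional nonnegative-definiteness is \emph{not sufficient}: take $f_0(x)=e^{-|x|^2/2}$, set $f=f_0$ off the origin and $f(0)=0$, and take $\nu=\delta_0$; this $f$ is nonnegative, lower semicontinuous, and has $\hat f=\hat f_0\ge 0$, yet the left side equals $0$ while the right side equals $f_0(0)=1$. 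What actually drives the reverse inequality is the \emph{pointwise} (classical) nonnegative-definiteness of $f$. In its integrated form against the signed measure $\lambda=\nu-\delta_y*\nu$ it gives $2G(0)-G(y)-G(-y)=\int\int f(z_1-z_2)\,\lambda(dz_1)\lambda(dz_2)\ge 0$ for the autocorrelation $G(y):=\int\int f(x_1-x_2-y)\,\nu(dx_1)\nu(dx_2)$; since $h_\epsilon$ is symmetric, this yields $\int f_\epsilon\,d\rho=\int h_\epsilon(y)G(y)\,dy\le G(0)=\int f\,d\rho$ for every $\epsilon$, and letting $\epsilon\downarrow 0$ in the Parseval identity you already proved gives $(2\pi)^{-d}\int\hat f|\hat\nu|^2\,d\xi\le\int f\,d\rho$. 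You should replace your closing paragraph with an argument of this kind (or defer to the proof in \cite{FoondunKhoshnevisan}); as written, the second half would fail.
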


\begin{lemma}\label{lem:bridge fourier}
If $f$ is lower semicontinuous, then 
\begin{align*}
&\int_{\RR^d} \int_{\RR^d} p_{t-s}(x-y_1) p_s*\mu(y_1)p_{t-s}(x-y_2)p_s*\mu(y_2)f(y_1-y_2) dy_1dy_2\\
 &\qquad\leq \frac{ [ p_t*\mu(x)]^2}{(2\pi)^d} \int_{\RR^d} \exp \left[ -2s \mathrm{Re} \Phi\left((1-\frac{s}{t})\xi\right)- 2(t-s)\mathrm{Re}\Phi\left(\frac{s}{t}\xi\right)\right] \hat{f}(\xi)d\xi\,.
\end{align*}
\end{lemma}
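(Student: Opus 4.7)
The plan is to reduce the claim to a pointwise Fourier estimate via Lemma \ref{eq:lem Davar}, and then to prove that estimate by a direct Fourier computation on the underlying L\'evy bridge.

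By Fubini and the Chapman--Kolmogorov identity $\int p_{t-s}(x-y)\,p_s(y-z)\,dy = p_t(x-z)$, one checks that $\tilde g(y):=p_{t-s}(x-y)\,p_s*\mu(y)/[p_t*\mu(x)]$ integrates to $1$ over $\RR^d$, so $\nu(dy):=\tilde g(y)\,dy$ is a probability measure --- probabilistically, the conditional law of $X_s$ given $X_0\sim\mu$ and $X_t=x$ under the L\'evy process. Pulling out $[p_t*\mu(x)]^2$ and applying Lemma \ref{eq:lem Davar} rewrites the left-hand side of the claim as
\[
\frac{[p_t*\mu(x)]^2}{(2\pi)^d}\int_{\RR^d}\hat f(\xi)\,|\hat\nu(\xi)|^2\,d\xi.
\]
Since $\hat f\ge 0$, it suffices to prove the pointwise estimate $|\hat\nu(\xi)|^2\le \exp\bigl[-2s\,\mathrm{Re}\,\Phi((1-s/t)\xi)-2(t-s)\,\mathrm{Re}\,\Phi((s/t)\xi)\bigr]$ for every $\xi\in\RR^d$.

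To get a workable expression for $\hat\nu$, I would substitute $p_s*\mu(y)=(2\pi)^{-d}\int e^{i\eta y}\hat p_s(\eta)\hat\mu(\eta)\,d\eta$ and the analogous Fourier representation of $p_{t-s}(x-y)$, together with $\hat p_t=\hat p_s\hat p_{t-s}$, to write $\hat\nu(\xi)$ as a ratio of Fourier integrals both of the form $\int e^{i\eta x}\hat p_s(\cdot)\hat p_{t-s}(\cdot)\hat\mu(\cdot)\,d\eta$. Shifting the dummy variable $\eta\mapsto\eta+(1-s/t)\xi$ in the numerator re-centers the arguments of $\hat p_s$ and $\hat p_{t-s}$ at $(1-s/t)\xi$ and $-(s/t)\xi$ respectively --- exactly the coefficient-matching needed to extract the target exponential.

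The main obstacle will be bounding the ratio of the shifted numerator to the unshifted denominator by $|\hat p_s((1-s/t)\xi)\,\hat p_{t-s}((s/t)\xi)|$, whose square equals the claimed exponent via $|\hat p_r(\eta)|=e^{-r\,\mathrm{Re}\,\Phi(\eta)}$. The coefficient pairing $s\leftrightarrow(1-s/t)\xi$ and $(t-s)\leftrightarrow(s/t)\xi$ is the L\'evy analog of the Brownian-bridge decomposition $X_s=(s/t)X_t+B_s^{\mathrm{br}}$ with $B_s^{\mathrm{br}}$ independent of $X_t$, which is what makes this pairing the natural one. I expect the final estimate to come from a Cauchy--Schwarz-type inequality applied inside the Fourier integral, separating the contribution of the shift from the ``base'' integral representation of $p_t*\mu(x)$.
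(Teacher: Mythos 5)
Your opening reduction is correct, and in fact slightly cleaner than the paper's: by Chapman--Kolmogorov, $\nu(dy):=p_{t-s}(x-y)\,p_s*\mu(y)\,dy/[p_t*\mu(x)]$ is a probability measure (when $p_t*\mu(x)>0$; otherwise both sides vanish), and Lemma \ref{eq:lem Davar} turns the left-hand side into $\frac{[p_t*\mu(x)]^2}{(2\pi)^d}\int \hat f(\xi)\,|\hat\nu(\xi)|^2\,d\xi$, whereas the paper first disintegrates in $z_1,z_2$ and uses a two-measure version of the same Parseval identity. The gap is in your plan for the pointwise bound $|\hat\nu(\xi)|^2\le \exp[-2s\,\mathrm{Re}\,\Phi((1-\tfrac{s}{t})\xi)-2(t-s)\,\mathrm{Re}\,\Phi(\tfrac{s}{t}\xi)]$, which is the entire content of the lemma. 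First, the representation $p_s*\mu(y)=(2\pi)^{-d}\int e^{i\eta y}\hat p_s(\eta)\hat\mu(\eta)\,d\eta$ is not available: $\mu$ is only assumed to satisfy \eqref{eq:initial cond int}, so it may be an infinite measure (Lebesgue measure is admissible) and $\hat\mu$ need not exist as a function. Second, and more fundamentally, the shift $\eta\mapsto\eta+(1-\tfrac{s}{t})\xi$ performs the advertised coefficient matching only when $\Phi$ is quadratic: for Brownian motion one has the exact identity $(t-s)\Phi(\eta)+s\Phi(\xi+\eta)=t\Phi(\eta+\tfrac{s}{t}\xi)+s\Phi((1-\tfrac{s}{t})\xi)+(t-s)\Phi(\tfrac{s}{t}\xi)$ by completing the square, and the ratio of shifted numerator to denominator is then exactly the claimed exponential. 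For a general L\'evy exponent no such algebraic identity holds, and no Cauchy--Schwarz or modulus bound inside these oscillatory integrals will deliver ``shifted numerator $\le |\hat p_s((1-\tfrac{s}{t})\xi)\,\hat p_{t-s}(\tfrac{s}{t}\xi)|\times$ denominator'': taking absolute values destroys precisely the cancellations that make the denominator equal to $p_t*\mu(x)$. So the key inequality is asserted, not proved.

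What replaces the completion of squares is probabilistic, and it is the paper's key step. Write $\hat\nu(\xi)=\int \hat\nu_z(\xi)\,\frac{p_t(x-z)\,\mu(dz)}{p_t*\mu(x)}$, where $\nu_z(dy)=\frac{p_{t-s}(x-y)p_s(y-z)}{p_t(x-z)}\,dy$ is the time-$s$ law of the L\'evy bridge from $z$ (at time $0$) to $x$ (at time $t$). The paper represents this bridge as $(1-\tfrac{s}{t})X_s-\tfrac{s}{t}(X_t-X_s)+z+\tfrac{s}{t}(x-z)$ and uses the independence of the increments $X_s$ and $X_t-X_s$ to compute its characteristic function explicitly, giving $|\hat\nu_z(\xi)|=\exp[-s\,\mathrm{Re}\,\Phi((1-\tfrac{s}{t})\xi)-(t-s)\,\mathrm{Re}\,\Phi(\tfrac{s}{t}\xi)]$ uniformly in $z$ (using $\mathrm{Re}\,\Phi(-\eta)=\mathrm{Re}\,\Phi(\eta)$). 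Since $\hat\nu$ is a convex combination of the $\hat\nu_z$, the same bound passes to $|\hat\nu(\xi)|$, which is exactly your missing estimate; combined with your reduction this finishes the proof. Your frame is salvageable, but the heart of the argument --- the bridge decomposition and independence of increments --- is absent from the proposal and cannot be replaced by the Fourier-shift/Cauchy--Schwarz plan as written.
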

\begin{proof}
We begin by noting that 
\begin{align*}
&\int_{\RR^d}\int_{\RR^d}p_{t-s}(x-y_1) p_s*\mu(y_1)p_{t-s}(x-y_2) p_s*\mu(y_2)f(y_1-y_2)dy_1 dy_2\\
&\qquad = \int_{\RR^d}\int_{\RR^d}\int_{\RR^d}\int_{\RR^d} \frac{p_{t-s}(x-y_1)p_s(y_1-z_1)}{p_t(x-z_1)}\frac{p_{t-s}(x-y_2)p_s(y_2-z_2)}{p_t(x-z_2)}f(y_1-y_2)dy_1dy_2\\
&\qquad\qquad\qquad \times p_t(x-z_1)p_t(x-z_2)\mu(dz_1)\mu(dz_2)\,,
\end{align*}
and as a function of $y$, the quotient 
$\frac{p_{t-s}(x-y)p_s(y-z)}{p_t(x-z)}$
is the probability density of the L\'evy bridge $\tilde{X}_{z, x, t}=\{\tilde{X}_{z, x, t}(s)\}_{0 \leq s \leq t}$ which is at $z$ when $s=0$ and at $x$ when $s=t$. Actually, $\tilde{X}_{z, x, t}(s)$ can be written as 
\begin{align*}
\tilde{X}_{z, x, t}(s) &= X_s-\frac{s}{t}X_t + z + \frac{s}{t}(x-z)\\
 &= (1-\frac{s}{t}) X_s - \frac{s}{t} (X_t-X_s) + z + \frac{s}{t}(x-z)\,,
\end{align*}
hence by the independence of increment of L\'evy process, we have
\begin{equation*}
\EE e^{i \xi \tilde{X}_{z, x, t}(s)} = \exp \left(-s \Phi\left((1-\frac{s}{t})\xi\right) - (t-s) \Phi\left(-\frac{s}{t}\xi\right) \right) e^{i (z+ \frac{s}{t}(x-z))}\,.
\end{equation*}
Thus, an application of Lemma \ref{eq:lem Davar} to $\nu_j(dy) = \frac{p_{t-s}(x-y)p_s(y-z_j)}{p_t(x-z_j)}dy$, $j=1,2$, yields
\begin{align*}
&\int_{\RR^d} \int_{\RR^d} \frac{p_{t-s}(x-y_1)p_s(y_1-z_1)}{p_t(x-z_1)} \frac{p_{t-s}(x-y_2)p_s(y_2-z_2)}{p_t(x-z_2)} f(y_1-y_2) dy_1 dy_2 \\
&\qquad= \frac{1}{(2\pi)^d} \int_{\RR^d} \EE e^{i \xi \tilde{X}_{z_1, x, t}(s)} \overline{\EE e^{i \xi \tilde{X}_{z_2, x, t}(s)}} \hat{f}(\xi)d\xi \\
&\qquad\leq  \frac{1}{(2\pi)^d} \int_{\RR^d}\exp \left[-2s \text{Re}\Phi\left((1-\frac{s}{t})\xi\right) - 2(t-s) \text{Re}\Phi\left(-\frac{s}{t}\xi\right) \right] \hat{f}(\xi)d\xi\,,
\end{align*}
which proves the lemma. 
\end{proof}

To prove Theorem \ref{thm:existence bd}, we first define a norm for all $\beta, p >0$ and all predictable random fields $v:=v(t,x)$, 
\begin{equation}
\|v\|_{\beta, p} = \sup_{t > 0} e^{-\beta t} \sup_{x \in \RR^d} \|v(t,x)\|_{L^p(\Omega)}\,.
\end{equation}
Let $\mathcal{B}_{\beta, p}$ denote the collection of all predictable random fields $v:= \{v(t,x)\}_{t\geq 0, x\in \RR^d}$ such that $\|v\|_{\beta, p} < \infty$. We note that after the usual identification of a process with its modifications, $\mathcal{B}_{\beta, p}$ is a Banach space (see Section 5 in \cite{FoondunKhoshnevisan}).

\begin{proof}[Proof of Theorem \ref{thm:existence bd}]

We use Picard iteration. Set 
\begin{align*}
&u^0(t,x):= p_t*\mu(x)\,,\\
& u^{n+1}(t,x):= p_t*\mu(x)+ \int_0^t \int_{\RR^d} p_{t-s}(x-y)b(u^n(s,y))dy ds \\
&\qquad \qquad\qquad\qquad\qquad+ \int_0^t \int_{\RR^d} p_{t-s}(x-y) \sigma(u^n(s,y))W(ds,dy)\,.
\end{align*}
We first show that whenever $\beta$ is chosen such that $B(\beta, p)<1$, where $B(\beta, p)$ is defined in \eqref{eq:B},  then,  for any $n \geq 1$, 
\begin{equation}\label{eq:uniform bd in t and x}
\left\|\frac{\tau+ |u^{n}|}{\tau+p*\mu}\right\|_{\beta, p} < \infty\,.
\end{equation}
Note that by the dominated convergence theorem, the condition $B(\beta, p)<1$ can be achieved if $\beta$ is sufficiently large. 

Recall that $\tau$ is defined in \eqref{eq:tau}. We start with the inequality 
\begin{align*}
\frac{\tau + |u^{n+1}(t,x)|}{\tau + p_t*\mu(x)} \leq &1 + \left| \int_0^t \int_{\RR^d} \frac{p_{t-s}(x-y)[\tau + p_s*\mu(y)]}{\tau + p_t* \mu(x)} \frac{b(u^n(s,y))}{\tau + p_s*\mu(y)}dy ds\right|\\
&+ \left|\int_0^t \int_{\RR^d} \frac{p_{t-s}(x-y) (\tau + p_s*\mu(y))}{\tau + p_t*\mu(x)} \frac{\sigma(u^n(s,y))}{\tau + p_s*\mu(y)} W(ds,dy)\right|\,.
\end{align*}
\eqref{eq:uniform bd in t and x} is clearly true for $n=0$. Using induction, assume \eqref{eq:uniform bd in t and x} is true for some $n$, using Burkholder inequality (see \cite{Davis}) and the assumption on $\sigma$ and $b$ , we obtain
\begin{align*}
&\left\|\frac{\tau+ |u^{n+1}(t,x)|}{\tau+p_t*\mu(x)}\right\|_{L^p(\Omega)}\\
\leq& 1+ L_b \int_0^t \int_{\RR^d} \frac{p_{t-s}(x-y)[\tau + p_s*\mu(y)]}{\tau+p_t*\mu(x)} \left\|\frac{\tau + |u^n(s,y)|}{\tau + p_s*\mu(y)}\right\|_{L^p(\Omega)}dy ds\\
&+z_p L_{\sigma} \bigg( \int_0^t \int_{\RR^d}\int_{\RR^d}  \frac{p_{t-s}(x-y_1) (\tau+ p_s* \mu(y_1))}{\tau+p_t*\mu(x)}\frac{p_{t-s}(x-y_1) (\tau+ p_s* \mu(y_1))}{\tau+p_t*\mu(x)} \\
&\qquad \qquad\times \left\| \frac{\tau + |u^n(s,y_1)|}{\tau + p_s* \mu(y_1)}\right\|_{L^p(\Omega)} \left\| \frac{\tau + |u^n(s,y_2)|}{\tau + p_s* \mu(y_2)}\right\|_{L^p(\Omega)} f(y_1-y_2) dy_1 dy_2 ds\bigg)^{1/2}\,,
\end{align*}
multiplying both sides by $e^{-\beta t}$ and applying Minkowski's inequality to the third summand above we obtain
\begin{align*}
&\qquad e^{-\beta t} \left\| \frac{\tau + |u^{n+1}(t,x)|}{\tau + p_t* \mu(x)}\right\|_{L^p(\Omega)}\\
& \leq 1 
 + L_b\left\| \frac{\tau + |u^n|}{\tau + p* \mu} \right\|_{\beta, p}\int_0^t \int_{\RR^d} e^{-\beta(t-s)} \frac{p_{t-s}(x-y)[\tau + p_s*\mu(y)]}{\tau + p_t* \mu(x)} dy ds \\
 &\quad +  z_p L_{\sigma}\left\|\frac{\tau + |u^n|}{\tau + p*\mu}\right\|_{\beta, p} \left( \int_0^t \int_{\RR^d} \int_{\RR^d} e^{-2\beta(t-s)} p_{t-s}(x-y_1) p_{t-s}(x-y_2) f(y_1-y_2) dy_1 dy_2 ds\right)^{1/2}   \\
 & \quad + z_p L_{\sigma}  \left\|\frac{\tau + |u^n|}{\tau + p*\mu}\right\|_{\beta, p} \\
 &\qquad \times\left( \int_0^t \int_{\RR^d} \int_{\RR^d} e^{-2\beta(t-s)}\frac{p_{t-s}(x-y_1)p_s*\mu(y_1)}{p_t*\mu(x)} \frac{p_{t-s}(x-y_2)p_s*\mu(y_2)}{p_t*\mu(x)} f(y_1-y_2)dy_1 dy_2 ds\right)^{1/2} \\
 &:= 1+ I_1 + I _2 + I _3\,,
\end{align*}
where in obtaining $I_2$ and $I_3$ above, we have used the bound
\begin{equation}\label{eq: bd tau heat kernel}
\frac{p_{t-s}(x-y) \tau}{\tau + p_t*\mu(x)} \leq p_{t-s}(x-y) \quad \text{and} \quad \frac{p_{t-s}(x-y)p_s*\mu(y)}{\tau + p_t*\mu(x)} \leq \frac{p_{t-s}(x-y)p_s*\mu(y)}{ p_t*\mu(x)}\,.
\end{equation}
We will estimate $I_1, I_2, I_3$ separately. For $I_1$, 
the semigroup property of $p_t(x)$ yields  
\begin{equation*}
I_1 \leq \frac{L_b}{\beta} \left\|\frac{\tau+ |u^n|}{\tau + p*\mu}\right\|_{\beta, p}\,.
\end{equation*}
For $I_2$, an application of Lemma \ref{eq:lem Davar} to $\nu(dy)=p_{t-s}(x-y)dy$ yields
\begin{align*}
&\int_0^t \int_{\RR^d} \int_{\RR^d} e^{-2\beta(t-s)} p_{t-s}(x-y_1) p_{t-s}(x-y_2) f(y_1-y_2) dy_1 dy_2 ds\\
&\qquad= \frac{1}{(2\pi)^{d}}\int_0^t \int_{\RR^d} e^{-2(t-s)\text{Re}\Phi(\xi)} \hat{f}(\xi)d\xi e^{-2\beta(t-s)} ds  \leq\frac{1}{2(2\pi)^d} \int_{\RR^d} \frac{\hat{f}(\xi)d\xi}{\beta + \text{Re}\Phi(\xi)}\,,
\end{align*}
thus we obtain
\begin{equation*}
I _2 \leq z_p L_{\sigma} \left( \frac{1}{2(2\pi)^d}\tilde{\Upsilon}(\beta)\right)^{1/2} \left\|\frac{\tau+ |u^n|}{\tau + p*\mu}\right\|_{\beta, p}\,.
\end{equation*}
Finally, an application of Lemma \ref{lem:bridge fourier} yields
 \begin{align*}
 I_3 \leq z_p L_{\sigma}  \left\|\frac{\tau+ |u^n|}{\tau + p*\mu}\right\|_{\beta, p}\left( \frac{1}{(2\pi)^d} \Upsilon(\beta) \right)^{1/2}\,.
\end{align*}
Combining the estimates for $I_1, I_2, I_3$, we arrive at
\begin{equation*}
\left\|\frac{\tau+ |u^{n+1}|}{\tau + p*\mu}\right\|_{\beta, p} \leq 1 + B(\beta, p) \left\|\frac{\tau+ |u^{n}|}{\tau + p*\mu}\right\|_{\beta, p} \,,
\end{equation*}
where $B(\beta, p)$ is defined in \eqref{eq:B}. Using the iteration, we see that \eqref{eq:uniform bd in t and x} holds for all $n\geq 1$ if $B(\beta, p)<1$. 


The same technique applied to $\frac{u^{n+1}(t,x) - u^n(t,x)}{\tau + p_t*\mu(x)}$
yields that 
\begin{align*}
\left\| \frac{u^{n+1} - u^n}{\tau + p*\mu}\right\|_{\beta, p} \leq B(\beta, p)  \left\| \frac{u^{n} - u^{n-1}}{\tau + p*\mu}\right\|_{\beta, p}\,,
\end{align*}
and if $\beta$ is chosen such that $B(\beta, p)<1$, we will obtain that
\begin{equation*}
\sum_{n=1}^{\infty}\left\| \frac{u^{n} - u^{n-1}}{\tau + p*\mu}\right\|_{\beta, p} < \infty\,.
\end{equation*}
Therefore, we can find a predictable random field $u^{\infty} \in \mathcal{B}_{\beta,p}$ such that $\lim_{n \to \infty} u^n = u^{\infty}$ in $\mathcal{B}_{\beta, p}$. It is easy to see that this $u^{\infty}$ is a solution to equation \eqref{eq:mild}, and uniqueness is checked by a standard argument. 

To prove \eqref{eq:upper gamma},  we note that since $u\in \mathcal{B}_{\beta, p}$ for those $\beta$ such that $B(\beta, p)<1$, 
\begin{equation*}
\sup_{x\in \RR^d} \left\| \frac{u(t,x)}{\tau + p_t*\mu(x)}\right\|_{L^p(\Omega)} \leq \sup_{x\in \RR^d} \frac{\tau}{\tau+ p_t*\mu(x)} + C e^{\beta t} 
\end{equation*}
for some $C>0$ which does not depend on $t$, thus \eqref{eq:upper gamma} is proved and the proof of Theorem \ref{thm:existence bd} is complete.
\end{proof}

\section{Acknowledgement}
The author thanks Davar Khoshnevisan and David Nualart for stimulating discussions and encouragement.

\begin{small}

\end{small}

\begin{minipage}{0.6\textwidth}
\noindent\textbf{Jingyu Huang}\\
\noindent Department of Mathematics\\
\noindent University of Utah\\
\noindent Salt Lake City, UT 84112-0090\\
\noindent\emph{Email:} \texttt{jhuang@math.utah.edu}\\
\noindent\emph{URL:} \url{http://www.math.utah.edu/~jhuang/}\\
\end{minipage}

\end{document}